\newskip\stdskip
\newtheorem{thm}{Theorem}[section]
\newtheorem{lemma}[thm]{Lemma}
\theoremstyle{definition}
\newtheorem{rem}[thm]{Remark}
\begin{document}

\author[B. Chantraine]{Baptiste Chantraine}

\address{Universit\'e de Nantes, France.}
\email{baptiste.chantraine@univ-nantes.fr}

\subjclass[2010]{Primary 53D12; Secondary 57R17.}

\keywords{Lagrangian submanifolds, Reeb chords}

\title{Reeb chords of Lagrangian slices}
\maketitle

\begin{abstract}
  In this short note we observe that the boundary of a properly embedded compact exact Lagrangian sub-manifold in a subcritical Weinstein domain $X$ necessarily admits Reeb chords. The existence of a Reeb chord either follows from an obstruction to the deformation of the boundary to a cylinder over a Legendrian sub-manifold or from the fact that the wrapped Floer homology of the Lagrangian vanishes once this boundary has been ``collared''. 
\end{abstract}

\section{Introduction}

Let $Y$ be a contact manifold  with contact form $\alpha$, a \emph{Lagrangian slice} in $Y$ is a sub-manifold $i:\Lambda \hookrightarrow Y$ such that their exists a Lagrangian embedding $$C: (1-\varepsilon,1+\varepsilon)\times \Lambda\hookrightarrow \left((1-\epsilon,1+\epsilon)\times  Y,d(t\alpha)\right)$$ satisfying:

$$C\left((1-\varepsilon,1+\varepsilon)\times \Lambda\right)\pitchfork \{1\}\times Y=C\left(\{1\}\times\Lambda\right).$$

It is the opinion of the author that such objects are interesting and the aim of this note is to attract attention to those. Note that the definition depends on the contact form as the transverse intersection condition depends on the identification of the symplectisation with $\mathbb{R}_+\times Y$.  Any Legendrian sub-manifold of $Y$ is a Lagrangian slice since the trivial cylinder of such a sub-manifold is Lagrangian (that this notion does not depend on the choice of the contact forms comes from the fact that those are the slices giving Lagrangians tangent to the Liouville vector field, they are therefore transverse to any hypersurface transverse to this vector field).  Naturally Lagrangian slices appear in the following manner : let $L$ be a Lagrangian sub-manifold of a symplectic manifold $(M,\omega)$. Let $Y$ be a contact hyper-surface of $M$ such that $L\pitchfork Y$. Then $\Lambda=Y\pitchfork L$ is a Lagrangian slice. A \emph{Reeb chord} of a Lagrangian slice $\Lambda$ is a trajectory of the Reeb vector field of $\alpha$ whose start and end points are on $\Lambda$.

There is a seemingly related notion to Lagrangian slices which are prelagrangian submanifolds (studied for instance in \cite{MR1334868}). A \emph{prelagrangian} submanifold of $(Y^{2n-1},\ker\alpha)$ is an $n$ dimensional submanifold $L$ transverse to $\ker\alpha$ such that there is a function $f$ on $Y$ such that the pullback of $f\alpha$ to $L$ is closed. In this situation any preimage of regular values of $f|_L$ is a Lagrangian slice. As embedded projections of Lagrangians in $\mathbb{R}_+\times Y$ to $Y$ are prelagrangian the correspondence goes the other way around (note that one can always arrange a transverse slice to have embedded image in $Y$ playing a bit with the Reeb flow).

A Lagrangian slice $\Lambda$ is said to be \emph{fillable} if there exists a filling $X$ of $Y$ and a properly embedded Lagrangian sub-manifold $L$ of $X$ such that $\partial L=L\pitchfork Y=\Lambda$. Those are the slices that appear when intersecting closed Lagrangians (or Lagrangians cylindrical at infinity in Liouville manifolds) with a \emph{separating} contact hyper-surface. 

If one can deform the Liouville structure on $(1-\varepsilon,1+\varepsilon)\times Y$ keeping $Y$ a contact hyper-surface and so that the Liouville vector field is tangent to $(1-\varepsilon,1+\varepsilon)\times \Lambda$ then we say that the slice is \emph{collarable}. In \cite{collarable} we gave examples of non-collarable fillable slices. Other examples of non-collarable slices appear in \cite{Lin} and \cite{Eliash_Mur_Caps} as a slice of a Lagrangian cap cannot be collarable near its maxima.

\begin{rem}\label{rem:collar}
  One might find this definition of collarable unsatisfying from the contact point of view: the contact structure on $Y$ changes. In the compact (or local) case, Moser-Gray type theorem implies that this is the same as asking that there is an isotopy of $\Lambda$ through Lagrangian slices to a Legendrian sub-manifold of $Y$. We keep the definition this way as it is easier to manipulate, it is more convenient to study Reeb chords of slices, and it relates more easily to the notion of regular Lagrangian.
\end{rem}

The question of collarability of slices is implicit in the question of decomposability of cobordisms (see \cite{Ekhoka} and \cite{MR3493408}) which appears prominently in the literature about Lagrangian cobordisms in low dimension. In all dimension the collarability question is a sub-question of the question of regularity of Lagrangian sub-manifold from \cite{MR4090744} (a Lagrangian is regular if one can modify the Liouville vector field so that it is tangent to the Lagrangian) as in low dimension regular cobordisms are decomposable (as observed for instance in \cite{MR4045364}). The author believes that the quantitative study of Lagrangian slice, particularely Inequality \eqref{eq:2} from Section \ref{sec:coll-lagr-slic}, could provide some tools to study the question of regularity of cobordism.

There is a whole hierarchy of fillability that we can imagine and it is not the purpose of this note to give a comprehensive overview of it. Our aim is to suggest that even if such objects form a larger class than Legendrian sub-manifolds they still admits Reeb chords in a similar fashion as Legendrians do (we do not claim here that all Legendrians admits Reeb chords). For instance we prove the following:

\begin{thm}\label{thm:main}
  Let $\Lambda$ be a compact Lagrangian slice in a compact contact manifold $Y$ such that there exist a subcritical Weinstein manifold $W$ of $Y$ and a proper exact Lagrangian embedding $L\hookrightarrow W$ such that $\partial L=\Lambda$. Then $\Lambda$ admits a Reeb chord.
\end{thm}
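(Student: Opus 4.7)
My plan is to argue by contradiction: assume $\Lambda$ admits no Reeb chord and derive a contradiction by computing the wrapped Floer homology of $L$ in two incompatible ways. The strategy has three steps: upgrade the chord-free slice to a Legendrian by collaring, complete to a cylindrical-at-infinity Lagrangian in $\hat{W}$, and compare two computations of $HW^*$.

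\emph{Collaring and completion.} The absence of Reeb chords should kill the obstructions to deforming the Liouville structure on a collar of $Y$ in $W$ so that the Liouville vector field becomes tangent to $L$ near $\Lambda$; presumably this is the quantitative content of Inequality \eqref{eq:2} of Section \ref{sec:coll-lagr-slic}, with Reeb chords playing the role of the obstruction. After this deformation $\Lambda$ is isotopic through slices to a chord-free Legendrian $\Lambda' \subset Y$, and I extend $L$ into the completion $\hat{W} = W \cup_{Y} ([1,\infty) \times Y)$ by attaching $[1,\infty) \times \Lambda'$, producing a properly embedded exact Lagrangian $\hat{L} \subset \hat{W}$ with cylindrical end on which wrapped Floer homology is defined in the standard way.

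\emph{Two computations of $HW^*(\hat{L})$.} First, since $\Lambda'$ carries no Reeb chord, a small linear-at-infinity Hamiltonian perturbation contributes no generators at infinity, and the wrapped Floer complex reduces to the Morse complex of a generic function on $L$; hence $HW^*(\hat{L}) \cong H^*(L)$, which is nonzero since $L$ is compact and nonempty. Second, because $W$ is subcritical Weinstein, $\hat{W}$ splits as $\hat{W}' \times \C$ up to Liouville deformation, and a Hamiltonian generated by translation in the $\C$-factor displaces $\hat{L}$ from itself in $\hat{W}$; displaceability of an exact Lagrangian with Legendrian cylindrical end then forces $HW^*(\hat{L}) = 0$. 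The incompatibility of these two conclusions gives the required contradiction.

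The main obstacle is the collaring step: I need to argue that chord-freeness implies not merely an infinitesimal flattening of $L$ but a genuine global Liouville deformation tangent to $L$ near $\Lambda$, and I must verify that this deformation preserves subcriticality (or at least the vanishing of $HW^*$). As the abstract suggests, an alternative route bypasses collaring entirely by setting up wrapped Floer homology directly for slices, with generators consisting of interior intersection points together with Reeb chords of $\Lambda$; the same Morse-versus-displaceability dichotomy then yields the contradiction, with chord-freeness interpreted directly as emptiness of the chord generator set.
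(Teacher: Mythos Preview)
Your proposal is correct and follows the same architecture as the paper's proof: assume no chords, invoke Section~\ref{sec:coll-lagr-slic} (specifically Lemma~\ref{lemnotcollar}, since ``no chords'' trivially implies ``no small chords'') to collar $\Lambda$ to a Legendrian, and then compute wrapped Floer homology of the resulting cylindrical Lagrangian in two ways---once as $H^*(L)$ because there are no chord generators, and once as zero because $W$ is subcritical.

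The only substantive difference is how you obtain the vanishing. The paper quotes Cieliebak's result that $SH(W)=0$ for subcritical $W$ and then invokes the $SH(W)$-module structure on $WF(L)$ from Ritter to conclude $WF(L)=0$. You instead use Cieliebak's splitting $\hat W \simeq \hat W'\times\C$ and displaceability in the $\C$-factor to kill $HW^*(\hat L)$ directly. Both are standard; your route is arguably more self-contained (it avoids the module structure), while the paper's route generalises immediately to any $W$ with $SH(W)=0$, as noted in Section~\ref{sec:conclusion}. Your worry about whether the collaring deformation preserves subcriticality is unfounded: the deformation $\lambda\mapsto\lambda+dH$ is supported in a collar of $Y$, leaves the symplectic form $d\lambda$ unchanged, and does not touch the handle structure of $W$; in particular the completion is unchanged up to exact symplectomorphism. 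The paper also records (Remark~\ref{rem:stillchords}) that chord-freeness survives the change of contact form, which you implicitly use when asserting $\Lambda'$ is chord-free.
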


In particular, if its sub-level is subcritical, the transverse intersection of a compact  exact Lagrangian with the level set of a convex Hamiltonian always has trajectories of the Hamiltonian vector field starting and ending on the Lagrangian on that level set.

In Section \ref{sec:proof-ot-theorem} we provide a proof of this theorem and it appears to be ridiculously easy (it almost appears in \cite[Section 5.1]{chantraine_conc}), so the interest of the paper lies somewhere else (if it lies anywhere). As mentioned before the only purpose of this note is to raise some interesting points about such objects and discuss some of their Hamiltonian dynamics properties.

\paragraph{\textbf{Acknowledgement}}

Despite the short length of this note I have been casually talking about this observation for quite a while. I want to acknowledge discussions with Vincent Colin, Kyle Hayden, Emmy Murphy and Egor Shelukhin that lead me to write it. I also want to thank an anonymous referee whose comments helped to clarify and improve the paper in many places. I am partially supported by the ANR projects QUANTACT (ANR-16-CE40-0017) and  ENUMGEOM (ANR-18-CE40-0009) 

\section{What does a Lagrangian slice look like?}
\label{sec:how-does-lagrangian}
Since they form a a larger class than Legendrian sub-manifolds it is a natural question to ask what properties a Lagrangian slice must satisfies.

Let $i:\Lambda\rightarrow Y$ be a sub-manifold. If it is a Lagrangian slice (i.e. a transverse intersection of a Lagrangian in the symplectisation with $Y$) then the general (linear) theory of symplectic reduction tells you that the Reeb vector field is not tangent to $\Lambda$ and that $\Lambda$ projects to a (smooth) Lagrangian immersion in the (singular) reduced symplectic space $Y/R_\alpha$. So if it is a Lagrangian slice then we must have $d(i^*\alpha)=0$ and $di(T\Lambda)\cap \ker d\alpha=\{0\}$.

Conversely, let $i:\Lambda\hookrightarrow Y$ a sub-manifold such that the pull-back of $\alpha$ is closed and $di(T\Lambda)\cap \ker d\alpha=\{0\}$. Let $V$ be a copy of a neighborhood of the $0$-section in $T^*\Lambda$ such that $\Lambda$ corresponds to the $0$-section and $V$ is transverse to $R_\alpha$. The characteristic foliation of the hypersurface $(1-\varepsilon,1+\epsilon)\times V$ in $(1-\varepsilon,1+\epsilon)\times Y$ is transverse to $\{1\}\times V$ and crossing $\Lambda$ with this foliation realises $\Lambda$ as a Lagrangian slice. We have proved:

\begin{lemma}
  
  Let $i:\Lambda^{n-1}\rightarrow Y^{2n-1}$ be an embedding. The $\Lambda$ is a Lagrangian slice iff $di(T\Lambda)\cap \ker d\alpha=\{0\}$ and $i^*\alpha$ is closed.
\end{lemma}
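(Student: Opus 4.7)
Both implications are sketched in the paragraphs preceding the lemma, so the plan is to formalize each direction.

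For necessity, I would fix a Lagrangian extension $C:(1-\varepsilon,1+\varepsilon)\times\Lambda\hookrightarrow(1-\varepsilon,1+\varepsilon)\times Y$ and evaluate the Lagrangian condition at a point $(1,p)$. Since $C(1,q)=(1,i(q))$, for $v_1,v_2\in T_p\Lambda$ one has $dC(v_j)=(0,di(v_j))$; because $dt$ annihilates $T(\{1\}\times Y)$, the Lagrangian relation $C^*d(t\alpha)=0$ collapses at these inputs to $d\alpha(di(v_1),di(v_2))=0$, giving $i^*d\alpha=0$, so $i^*\alpha$ is closed. For the Reeb-transversality, the transversality of the image of $C$ to $\{1\}\times Y$ forces $dC(\partial_s)$ to have a nonzero $\partial_t$-component at $(1,p)$; after rescaling the source $s$-coordinate I can write $dC(\partial_s)=\partial_t+W$ with $W\in T_{i(p)}Y$. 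If $v\in T_p\Lambda$ satisfies $di(v)=fR_\alpha$, then expanding the Lagrangian identity yields
\[
0=d(t\alpha)(\partial_t+W,fR_\alpha)=f\alpha(R_\alpha)+d\alpha(W,fR_\alpha)=f,
\]
using $\iota_{R_\alpha}d\alpha=0$, $\alpha(R_\alpha)=1$, and that $dt$ vanishes on $TY$. Hence $di(v)=0$, and $v=0$ by injectivity of $di$.

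For sufficiency, I would follow the sketch in the text. The hypothesis $di(T\Lambda)\cap\ker d\alpha=\{0\}$ makes $\Lambda$ transverse to $R_\alpha$, so a tubular-neighborhood argument extends $\Lambda$ to a codimension-one submanifold $V\subset Y$ containing $\Lambda$ as zero section and transverse to $R_\alpha$. Since $\ker d\alpha=\mathbb{R}\,R_\alpha$ meets $TV$ only at zero, $(V,d\alpha|_V)$ is a symplectic submanifold, and $i^*d\alpha=0$ together with $\dim\Lambda=\tfrac{1}{2}\dim V$ then forces $\Lambda$ to be Lagrangian in $V$. In the symplectization I would then consider the hypersurface $H=(1-\varepsilon,1+\varepsilon)\times V$: a direct calculation should identify its characteristic line at $(t,v)$ as the span of $t\partial_t-Z$, where $Z$ is the vector field on $V$ determined by $\iota_Z d\alpha|_V=\alpha|_V$. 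Saturating $\{1\}\times\Lambda$ by this foliation then produces a smooth $n$-dimensional submanifold $C$ of the symplectization, Lagrangian because the characteristic flow preserves the Lagrangian property inside the coisotropic hypersurface $H$, and meeting $\{1\}\times Y$ transversely along $\{1\}\times\Lambda$ because the characteristic leaves are transverse to the slice $\{1\}\times V$.

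The hard part will be the characteristic-foliation computation: once the leaf direction is shown to be transverse to the slices $\{t_0\}\times V$, the smoothness of the saturated set, the Lagrangian property of $C$, and the transverse intersection with $\{1\}\times Y$ all fall out simultaneously. Everything else reduces to symplectic linear algebra at a point and a local tubular-neighborhood construction.
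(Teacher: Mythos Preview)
Your proof is correct and follows the same strategy as the paper: symplectic linear algebra at a point for necessity, and saturating $\{1\}\times\Lambda$ by the characteristic foliation of $(1-\varepsilon,1+\varepsilon)\times V$ for sufficiency. You have simply made explicit the computations that the paper leaves to the phrases ``general (linear) theory of symplectic reduction'' and ``the characteristic foliation \dots\ is transverse to $\{1\}\times V$''; in particular your identification of the characteristic direction as $t\partial_t-Z$ with $\iota_Z d\alpha|_V=\alpha|_V$ (note that $\alpha(Z)=d\alpha(Z,Z)=0$ automatically, so the kernel condition is consistent) is exactly the content of the paper's transversality claim.
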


\begin{rem}
  Observe that when $Y$ is of dimension $3$ then these conditions reduce to asking that  $\Lambda$ is not tangent to the Reeb foliation which is a generic condition.
\end{rem}
Since for a Lagrangian slice $i^*\alpha$ is closed, one can define the notion of \emph{exact} Lagrangian slice by requiring $i^*\alpha$ to be exact (for instance the non-collarable examples from \cite{collarable}, \cite{Lin} and \cite{Eliash_Mur_Caps} are exact). This implies that the Lagrangian $(1-\epsilon,1+\epsilon)\times \Lambda$ is exact. This notion is invariant under modifications of the Liouville structure by Hamiltonian vector fields.

Explicit occurence of this construction appears in \cite{MR2134231} and \cite{Lin} in the particular case of the standard symplectic $3$ dimensional case where we see that up to translation in the Reeb direction the shape of a Lagrangian is determined by the symplectic reduction of its slices.

\section{Deformation of Liouville vector fields near the boundary.}
\label{sec:deform-liouv-vect}
In this section we study how to deform Liouville vector fields so that they remain transverse to a given contact hyper-surface, such computations are standard and appear in different forms in the literature see for instance \cite[Lemma 12.1]{SteinWeinstein}.

Let $(W,\lambda;f)$ be a Weinstein cobordism with positive boundary $i:Y\hookrightarrow W$ such that the contact form is given by $i^*\lambda$. Let $(1-\epsilon,1]\times Y$ be a collar neighbourhood of $Y$ such that the Liouville flow structure is given by $t\cdot\alpha$. Let $H$ be a function of the form $H(t,x)=\rho(t)h(x)$ for a function $h$ on $Y$ and a compactly supported increasing function $\rho$ on $(1-\epsilon,1]$ such that $\rho(1)=1$. We denote the forms $\lambda_H:=\lambda+dH$, it is still a Liouville form for the symplectic structure on $W$ the Liouville vector field for $\lambda_H$ is $V_\lambda+X_H$. The contact hyper-surface $Y$ is  still transverse to $X_{\lambda_H}$ iff at $t=1$:

$$dt(V_\lambda+X_H)>0$$

Since $dt(V_\lambda)=t$ this condition becomes
$$dt(X_H)>-1.$$

The symplectic form writes as $dt\wedge \alpha+t\cdot d\alpha$ thus we have that $$dH=h\cdot \rho' dt+\rho\cdot dh=dt(X_H)\alpha+t(X_h\iota d\alpha)-\alpha(X_H)dt.$$

Evaluating on $R_\alpha$ (the Reeb vector field of $\alpha$) at $t=1$ we obtain that $Y$ is transverse to $V_{\lambda_H}$ iff
\begin{equation}
dh(R_\alpha)>-1.\label{eq:1}
\end{equation}

\section{Collarability of Lagrangian slices.}
\label{sec:coll-lagr-slic}

Let $i:\Lambda\rightarrow Y$ be an exact Lagrangian slice. We want now to make a deformation similar to the preceding section such that $V_{\lambda_H}$ is tangent to the Lagrangian $j:(1-\epsilon,1+\epsilon)\times\Lambda\hookrightarrow X$ near $\{1\}\times\Lambda$. Let $f$ be a function such that $df=i^*\lambda$ and let $\gamma$ be a Reeb chord of $\Lambda$. We define the \emph{action} of $\gamma$ to be (observe the order of $0$ and $1$ here)
$$a(\gamma)=f(\gamma(0))-f(\gamma(1)).$$

\begin{rem}
  This is a well defined notion whenever $\gamma(0)$ and $\gamma(1)$ are on the same connected component of $\Lambda$ (we refer to such chords as \emph{pure}, the other type being called \emph{mixed}). In the Legendrian case (i.e. when $\Lambda$ is collarable), the action of a chord is $0$ (when it is pure) therefore the action is \emph{not} the action of the corresponding intersection point when taking the Lagrangian projection (when it make sense), this last quantity is what we refer in the present note as the \emph{length} of the Reeb chord.
\end{rem}

In order to have $V_{\lambda_H}$ tangent to the Lagrangian (i.e. symplectically orthogonal to it) we need to have $(j^*\lambda_H)=0$ near $\{1\}\times\Lambda$ which gives
$$j^*\lambda+d(H\circ j)=0.$$

This prescribes the value of $H$ near $\{1\}\times\Lambda$, in particular up to a constant $h=-f$ on $\{1\}\times\Lambda$. To extend $h$ to a function on $Y$ satisfying Inequality \eqref{eq:1} we need to have that any pure chord $\gamma$ of $\Lambda$ satisfies:

\begin{equation}
l(\gamma)>a(\gamma).\label{eq:2}
\end{equation}

Any Reeb chord that violates this inequality will be called \emph{small}, if it is not small a Reeb chord will be called \emph{long}. We have proved

\begin{lemma}\label{lemnotcollar}
  If a Lagrangian slice is not collarable then it has a small (pure) Reeb chords.
\end{lemma}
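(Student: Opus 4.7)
The strategy is to argue by contraposition: assume every pure Reeb chord of $\Lambda$ satisfies $l(\gamma) > a(\gamma)$, and construct a collaring deformation. By the analysis preceding the lemma, this reduces to producing a function $h : Y \to \mathbb{R}$ with $h|_\Lambda = -f + \mathrm{const}$ on each component and $dh(R_\alpha) > -1$ everywhere on $Y$. Given such $h$, the cutoff $H(t,x) = \rho(t) h(x)$ of Section~\ref{sec:deform-liouv-vect} yields a Liouville form $\lambda_H$ keeping $Y$ transverse to $V_{\lambda_H}$ (by \eqref{eq:1}) and making $V_{\lambda_H}$ tangent to $(1-\varepsilon,1+\varepsilon)\times\Lambda$ near $\{1\}\times\Lambda$ (by the orthogonality condition $j^*\lambda_H = 0$ that forced the boundary data for $h$), exhibiting $\Lambda$ as collarable.

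To construct $h$, I would exploit that the Lagrangian slice condition $di(T\Lambda)\cap\ker d\alpha = \{0\}$ forces $\Lambda$ transverse to $R_\alpha$, so the short-time Reeb flow defines tubular coordinates $(p,s) \in \Lambda \times (-\delta,\delta)$ on a neighborhood $U$ of $\Lambda$. Setting $h(p,s) = -f(p) - s$ and applying a small perturbation gives $dh(R_\alpha) > -1$ strictly on $U$ together with the prescribed boundary data. Extending $h$ to all of $Y$ is the substantive step: integrating the inequality $dh(R_\alpha) > -1$ along any Reeb trajectory returning to $\Lambda$ produces precisely the constraint $l(\gamma) > a(\gamma)$, and the hypothesis provides this strictly for every pure chord, leaving room to find a valid extension. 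One then glues the local definition on $U$ with an arbitrary smooth extension on $Y \setminus \Lambda$ by exploiting convexity and openness of the constraint $dh(R_\alpha) > -1$.

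The main obstacle is precisely this global extension. The local situation near $\Lambda$ is controlled by the Reeb-flow coordinates and the strict slack in \eqref{eq:2}, but producing $h$ on all of the compact manifold $Y$ requires matching these local data across Reeb trajectories that leave neighborhoods of $\Lambda$ and re-enter them after traversing the complement. The uniformity of the hypothesis---that \emph{every} pure chord is long, not just some finite collection---is exactly what is needed to guarantee the gluing is consistent across each such trajectory and produces a globally valid $h$.
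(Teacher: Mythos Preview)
Your proposal is correct and follows exactly the route taken in the paper: the paper's proof \emph{is} the discussion preceding the lemma, which (by contraposition) identifies the boundary condition $h|_\Lambda=-f$ forced by $j^*\lambda_H=0$ and observes that extending $h$ to all of $Y$ subject to \eqref{eq:1} is possible precisely when every pure chord satisfies \eqref{eq:2}. Your sketch of the extension via Reeb-flow tubular coordinates and the integration argument along returning trajectories is in fact more explicit than what the paper writes; the paper simply asserts that \eqref{eq:2} is what is ``needed'' for the extension and declares the lemma proved, leaving the global construction of $h$ just as implicit as you do in your final paragraph.
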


\begin{rem}\label{rem:stillchords}
  Observe that for any of the deformation $\lambda_H$ of $\Lambda$ as in Section \ref{sec:deform-liouv-vect} the Reeb flow of the new contact form $i^*(\lambda+dH)$ is given by a reparametrisation of the original Reeb flow since $d\alpha$ does not change (an explicit computation give another manifestation of Inequality \eqref{eq:1}). This implies that the question of existence of Reeb chord is unchanged.
\end{rem}

\begin{rem}
  When the Lagrangian slice is not exact one can still try to deform the Liouville form $t\alpha$ adding a \emph{closed} $1$-form. This closed form must extend the restriction of $\alpha$ to the slices so there is first a cohomological condition on the embedding of $\Lambda$ in $Y$. If this condition is satisfied then one can find a cover of $Y$ such that $\alpha$ restricted to the lift of $\Lambda$ is exact and proceed similarely as in the exact case.
\end{rem}

\section{Reeb chords of fillable slices.}
\label{sec:proof-ot-theorem}
We are now ready to prove our main theorem.
\begin{proof}[Proof of Theorem \ref{thm:main}]
  Let $L$ be an exact filling of the slice $\Lambda$. If there are no small Reeb chords then from Section \ref{sec:coll-lagr-slic} we know that we can modify the Liouville vector field so that $L$ becomes an exact filling of a Legendrian sub-manifold. Assuming $\Lambda$ has no chords at all (from Remarks \ref{rem:stillchords} it means that for either of the contact forms $\alpha$ and $\alpha_H$ is has no Reeb chords) then we can positively wrap a small deformation of $L$ without introducing any intersection point outside the domain $X$. We can use this wrapped copy to compute Wrapped Floer homology of $L$ from \cite{OpenString} (either wrapping all at once, or using a direct limit depending what is our favourite definition of $WF(L)$). This implies that no high energy intersection points are involved in the computation of the wrapped Floer homology, therefore it is isomorphic to the infinitesimally wrapped Floer homology i.e. $$WF(L)=H^*(L).$$ However since $W$ is subcritical it has vanishing symplectic homology, $SH(W)=0$ (see \cite{MR1726235}). This is a contradiction as $WF(L)$ is a module over $SH(W)$ (see \cite[Theorem 6.17]{ritter-tqft}) and thus $WF(L)=0$.

  This covers the collarable case, the other case is covered by Lemma \ref{lemnotcollar} and therefore the proof is complete.
\end{proof}

\section{Concluding remarks}
\label{sec:conclusion}

The proof of Theorem \ref{thm:main} applies to more general context depending on the tools one wants to use and the information we have on the slice or the filling to conclude existence of Reeb chords.

Obviously one can replace the subcritical condition of $W$ by the algebraic condition that $SH(W)=0$. Also if $SH(W)\not=0$ one can instead require that the Lagrangian filling $L$ does not intersect the Lagrangian skeleton of $W$ since such a condition guarantees that $WF(L)=0$ by Viterbo's functoriality (see \cite[Theorem 9.11]{CieliebakOancea} or \cite[Section 5]{OpenString}).

We can also hope to relax the vanishing of $SH(W)$ to some more specific case, as long as one can prove that $WF(L)\not = H^*(L)$ we can deduce the existence of the Reeb chords. For that purpose the structural results from \cite{CieliebakOancea} can reveal to be useful.

In another direction we can relax the fillability condition using \cite{Floer_cob} by only requiring that the slice is at the top of a Lagrangian cobordism with bottom being a Legendrian knot that admits an augmentation

It would be interesting if one could use considerations from sections \ref{sec:deform-liouv-vect} and \ref{sec:coll-lagr-slic} to investigate the regular Lagrangian question (and therefore the question of decomposability of Lagrangian cobordisms).

In \cite{MR915560} Arnol'd conjectured that any Legendrian in the standard $3$-sphere admits a Reeb chord for any Reeb vector field. This version was proved in \cite{MR1847594} (for all standard contact spheres) and in \cite{MR2784673} \cite{MR3190296} (for any Legendrian in any 3 dimensional contact manifold). In the discussion following his conjecture he also discusses that some homological bounds might exist for the number of such Reeb chords, some instance of this can be found in \cite{Ekholm_Contact_Homology} and \cite{Rigidityofendo} where some estimate is given in terms of the topology of the Legendrian or some of its Lagrangian fillings. The existence result from Theorem \ref{thm:main} falls in between: the collarable case indeed gives the usual estimate coming from wrapped Floer homology or linearised contact homology but the obstruction to collarability only provides one chord. Whether or not more can be found in the non collarable case is unknown to the author.
\bibliographystyle{plain}
\bibliography{Bibliographie_en}
\end{document}